\documentclass[11pt]{article}

\usepackage[dvipsnames]{xcolor}

\usepackage{tikz}

\usepackage{cite} 
\usepackage{setspace}
\usepackage[latin1]{inputenc}
\usepackage{amsmath}
 \usepackage{amsthm}
\usepackage{subfig}
\usepackage{amsfonts}
\usepackage{amssymb}
\usepackage{graphicx}
\usepackage{epsfig}
\usepackage{url}
\usepackage{todo}
\usepackage[margin=1.55in]{geometry}
\usepackage{bbm}
\usepackage{microtype}

\usepackage{tikz}
\usepackage[dvipsnames]{xcolor}
\usetikzlibrary{cd}
\usepackage{pgfplots}

\newtheorem{theorem}{Theorem}
\newtheorem{proposition}{Proposition}
\newtheorem{corollary}[theorem]{Corollary}
\newtheorem{lemma}{Lemma}

\newcommand{\R}{\mathbb{R}}

\newcommand{\U}{\mathrm{Uni}}

\newcommand{\conv}{\operatorname{conv}}
\newcommand{\cX}{\mathcal{X}}

\newcommand{\er}{Erd\H{o}s-R\'enyi }

\newcommand{\rint}{\operatorname{int}}



\usepackage{tikz}
\usetikzlibrary{shapes,arrows,graphs,graphs.standard,shapes.misc}
\usetikzlibrary{matrix,decorations.pathreplacing, calc, positioning,fit}
\usetikzlibrary{shapes.geometric}
\usetikzlibrary{decorations.markings}

\newtheorem{definition}{Definition}

\makeatletter
\def\blfootnote{\xdef\@thefnmark{}\@footnotetext}
\makeatother

\def\BibTeX{{\rm B\kern-.05em{\sc i\kern-.025em b}\kern-.08em
    T\kern-.1667em\lower.7ex\hbox{E}\kern-.125emX}}

\begin{document}
\title{The $H$-property of Line Graphons}
\author{M.-A. Belabbas\thanks{M.-A. Belabbas and T.~Ba\c{s}ar are with the Coordinated Science Laboratory, University of Illinois, Urbana-Champaign. Email: \texttt{\{belabbas,basar1\}@illinois.edu.}}
, X. Chen\thanks{X. Chen is with the Department of Electrical, Computer, and Energy Engineering, University of Colorado Boulder. Email: \texttt{xudong.chen@colorado.edu}.}, T.~Ba\c{s}ar$^*$
}
\date{}

\maketitle
\thispagestyle{empty} 

\begin{abstract}
\blfootnote{M.-A. Belabbas and X. Chen contributed equally to the manuscript in all categories.}
We explore in this paper sufficient conditions for the $H$-property to hold, with a particular focus on  the so-called line graphons. A graphon is a symmetric, measurable function from the unit square $[0,1]^2$ to the closed interval $[0,1]$. Graphons can be used to sample random graphs, and a graphon is said to have the $H$-property if graphs on $n$ nodes sampled from it admit a node-cover by disjoint cycles---such a cover is called  a Hamiltonian decomposition---almost surely as $n \to \infty$. A step-graphon is a graphon which is piecewise constant over rectangles in the domain. To a step-graphon, we assign two objects: its concentration vector, encoding the areas of the rectangles, and its skeleton-graph, describing their supports. These two objects were used in~\cite{bcb2021h} to establish necessary conditions for a step-graphon to have the $H$-property. In this paper, we prove that these conditions  are essentially also sufficient for the class of line-graphons, i.e., the step-graphons whose skeleton graphs are line graphs with a self-loop at an ending node. We also investigate borderline cases where neither the necessary nor the sufficient conditions are  met. 
\end{abstract}

\section{Introduction}\label{sec:introduction}
Graphons, introduced in~\cite{lovasz2006limits, borgs2008convergent} to study very large graphs, are increasingly relied upon as models for large networks. 
Roughly speaking, a graphon is a symmetric, measurable function $W: [0,1]^2\to [0,1]$ which can be thought of as an infinite-dimensional adjacency matrix.
Graphons have been put to use in the statistical analysis of random graphs, where the problem of graphon identification from sample networks~\cite{wolfe2013nonparametric} and the problem of detection of clusters in networks~\cite{choi2017co}, among others, have been explored. 
In parallel, graphons have appeared as models in control and game theory. 
For example, in~\cite{gao2019graphon}, 
the authors considered infinite-dimensional linear control systems where the system ``matrices'' (more precisely, operators on $\mathrm{L}^2([0,1],\R)$) are derived from graphons, and investigated the associated controllability properties and finite-dimensional approximations.
We also mention~\cite{gao2021linear,parise2021analysis} where the authors introduced  different types of graphon games; broadly speaking, these are the games that comprise a continuum of agents and for which the relations between these agents are described by a graphon. They then proceeded to investigate, among others, the existence of Nash equilibria and properties of finite-dimensional approximations.

In the above mentioned works~\cite{gao2019graphon,gao2021linear,parise2021analysis}, graphons were treated as infinite-dimensional extensions of finite dimensional adjacency matrices. We take a different point of view in this paper: we treat graphons as stochastic models for sampling large graphs. 
We follow the research line initiated in our earlier work~\cite{bcb2021h} and investigate the so-called $H$-property (see Definition~\ref{def:Hproperty} below) for graphons. More specifically, in the earlier work, we provided a set of necessary conditions for the $H$-property to hold. In the same paper, it was claimed that these necessary conditions were also essentially sufficient. In this paper, we elaborate on this sufficiency claim and prove it for the class of line-graphons, introduced formally in Subsection~\ref{ssec:linegraphon}. 
The reason for the choice of the class of line graphons is twofold: firstly, graphs sampled from line graphons are common in practical situation, as they encode a simple line topology. 
Secondly, the calculations for this class of graphons can be made rather explicit. Indeed, relying on the form of the necessary conditions, checking for their sufficiency in these cases will only require elementary results from the theory of \er random graphs. This relative simplicity makes an intuitive understanding of these conditions easier to attain.
In an upcoming paper, we will prove the sufficiency of these two conditions in the general case, which will then rely on a more abstract approach.

The remainder of the paper is organized as follows: In Section~\ref{sec:Hproperty}, we review the  procedure to sample random graphs from graphons and reproduce from~\cite{bcb2021h} the definition of $H$-property. Next, in Section~\ref{sec:stepgrafonobjects}, we will first introduce step-graphons and the associated key objects, namely, concentration vector, skeleton graph, and the edge polytope derived from the skeleton graph. In the same section, we will also state the conditions that are necessary or sufficient for a step-graphon to have the $H$-property.  
Then, in Section~\ref{sec:casestudies}, we will establish the sufficiency claim for the class of line graphons and investigate a borderline case where neither the necessary nor the sufficient conditions are met. The paper ends with conclusions.

\section{The $H$-property}\label{sec:Hproperty}

We  start this section by describing how to sample graphs from a graphon. 
\vspace{.2cm}

\noindent 
{\bf Sampling procedure}: Let $\U[0,1]$ be the uniform distribution on $[0,1]$. Given a graphon $W:[0,1]^2 \to [0,1]$, we sample an {\em undirected} graph $G_n=(V,E) \sim W$ on  $n$ nodes from $W$ according to the following the procedure: 
\begin{enumerate}
    \item Sample $y_1,\ldots,y_n\sim \U[0,1]$ independently. 
    We call $y_i$ the {\em coordinate of node} $v_i\in V$. 

    \item For any two distinct nodes $v_i$ and $v_j$, place an edge $(v_i,v_j) \in E$ with probability $W(y_i,y_j)$.
\end{enumerate}
According to the model, the probability of having an edge between nodes $v_i$ and $v_j$ in $G_n$ is thus a Bernoulli random variable with {\em coordinate dependent} parameter $W(y_i,y_j)$.  If the graphon is constant, say $W(s,t)=p$ for all $(s,t)\in [0,1]^2$, then $G_n \sim W$ is an Erd\H{o}s-R\'enyi random graph with parameter~$p$; see Subsection~\ref{ssec:er} for a definition of this class of graphs.
One can thus think, in this context, of graphons as a means to allow for an inhomogeneous probability of existence of an edge. 
\vspace{.2cm}

\noindent
{\bf $H$-property:}  
We next recall the $H$-property introduced in~\cite{bcb2021h}. To do so, we let $W$ be a graphon and $G_n \sim W$. We then define the so-called {\em directed} version $\vec G_n=(V,\vec E)$ of an undirected graph $G_n=(V,E)$, which is obtained by replacing every undirected edge of $G_n$ with two directed edges. 
More precisely, the node set of $\vec G_n$ is the same as the one of $G_n$, and the edge set is given by 
$$\vec E :=\{v_iv_j, v_jv_i \mid (v_i,v_j) \in E \},$$ where, by convention, an undirected edge between $v_i$ and $v_j$ is written as $(v_i,v_j)$ and a directed edge from node $v_i$ to node $v_j$ as $v_iv_j$. 

A {\em Hamiltonian decomposition} in $\vec G_n$  is a subgraph $\vec H =  (V, \vec E')$, with the same node set of $\vec G_n$ such that $\vec H$ is a disjoint union of directed cycles. 
Hamiltonian decompositions appear in various guises in  control problems. We just mention here that they arise in the study of structural stability of linear systems~\cite{belabbas_algorithmsparse_2013,belabbas2013sparse} and structural controllability of linear ensemble systems~\cite{chen2021sparse}.  We refer to~\cite{bcb2021h} for more details.

We now  define the $H$-property:

\begin{definition}[$H$-property~\cite{bcb2021h}]\label{def:Hproperty}
Let $W$ be a graphon and $G_n\sim W$. 
Then, $W$ has the {\bf $H$-property} if 
$$
\lim_{n\to\infty}\mathbb{P}(\vec G_n \mbox{ has a Hamiltonian decomposition}) = 1.
$$
\end{definition}

It turns out that the $H$-property is essentially a ``zero-one'' property:  for {\em almost all} graphons $W$, the probability of having a Hamiltonian decomposition is either $0$ or $1$ in the limit. This property is, however, not true for {\em all} graphons; we provide in Subsection~\ref{ssec:notonezero} an example showcasing this fact. We will elaborate on this property later in the next section.

\section{Step-graphons and associated objects}\label{sec:stepgrafonobjects}

\subsection{Step-graphons}\label{ssec:stepgraphons}
Following~\cite{bcb2021h}, we  restrict our attention to the so-called {\em step-graphons}, defined as follows:

\begin{definition}[Step-graphon and its partition]\label{def:stepgraphon}
We call a graphon $W$ a {\bf step-graphon} if there exists an increasing sequence $0 = \sigma_0 < \sigma_1< \cdots < \sigma_q = 1$ such that $W$ is constant over each rectangle $[\sigma_{i}, \sigma_{i + 1})\times [\sigma_{j}, \sigma_{j + 1})$ for all $0\leq i, j\leq q-1$ (there are $q^2$ rectangles in total).  The sequence $ \sigma = (\sigma_0,\sigma_1,\ldots,\sigma_q)$ is a {\bf partition for $W$}.
\end{definition}

In words,  $W$ is a step-graphon if the interval $[0,1]$ can be split into  subintervals $\mathcal{R}_1,\ldots,\mathcal{R}_q$ with the property that $W$ is constant over their products  $\mathcal{R}_i\times \mathcal{R}_j$, which are rectangles in the plane.

Given a graph $G_n$ sampled from a step-graphon $W$ with partition sequence $\sigma$, we let $n_i(G_n)$ be the number of nodes $v_j$ of $G_n$ whose coordinates $y_j \in [\sigma_{i-1},\sigma_i)$ (see item~1 of the sampling procedure). When $G_n$ is clear from the context, we simply write $n_i$.

\subsection{Concentration vectors and skeleton graphs}\label{ssec:concenandskel}
We now present the key objects associated with a step-graphon that are needed to decide whether it has the $H$-property, namely, its concentration vector, skeleton graph, and the so-called edge polytope of the skeleton graph. These objects were introduced in~\cite{bcb2021h}.    

We first have the following definition:

\begin{definition}[Concentration vector]
    Let $W$ be  a {step-graphon} with partition $\sigma = (\sigma_0,\ldots,\sigma_q)$. 
    The associated {\bf concentration vector} $x^* = (x^*_1,\ldots, x^*_q)$ has entries  defined as follows: 
    $x^*_i := \sigma_i - \sigma_{i-1}$, for all $i = 1,\ldots, q$. 
    The  {\bf empirical concentration vector} of a graph $G_n \sim W$ is defined as
\begin{equation}\label{eq:defecv}x(G_n):= \frac{1}{n}(n_1(G_n),\ldots, n_q(G_n)).
\end{equation}
\end{definition}

When $G_n$ is clear from the context, we will simply use $x$ to denote the empirical concentration vector. 
Observe that for $n$ fixed, $nx = (n_1,\ldots, n_q)$ is a multinomial random variable with $n$ trials and $q$ outcomes with probabilities $x^*_i$, for $1 \leq i \leq q$. From Chebyshev's inequality, we have that for any $\epsilon>0$, 
\begin{equation}\label{eq:oldlemma1}\mathbb{P}(\|x(G_n)-x^*\| > \epsilon)\leq \frac{c}{n^2\epsilon^2},
\end{equation} 
where $c$ is a constant independent of~$\epsilon$ and~$n$.

We next have the following definition:

\begin{definition}[Skeleton graph]\label{def:skeleton}
Let $W$ be a step-graphon with a partition $\sigma = (\sigma_0,\ldots, \sigma_q)$. We define the  undirected graph $S = (U, F)$ on $q$ nodes, called  the {\bf skeleton graph} of $W$ for the partition~$\sigma$,  with $U =\{u_1,\ldots, u_q\}$ and edge set $F$ as follows: there is an edge between $u_i$ and $u_j$ if and only if $W$ is non-zero over $[\sigma_{i-1},\sigma_i)\times [\sigma_{j - 1}, \sigma_j)$. 
\end{definition}

Note that there is a graph homomorphism which assigns the nodes of an arbitrary $G_n = (V, E)\sim W$ to their corresponding  nodes in the skeleton graph $S$: 
\begin{equation}\label{eq:defpi}
\pi:  v_j\in V \mapsto \pi(v_j) = u_i\in U,
\end{equation}
where $u_i$ is such that $\sigma_{i-1} \leq y_j < \sigma_{i}$, with $y_j$ the coordinate of $v_j$.

Let $S=(U,F)$ be a skeleton graph. 
We decompose the edge set of $S$ as $F=F_0 \cup F_1$, where elements of $F_0$ are self-loops, and elements of $F_1$ are edges between distinct nodes. 
Given an arbitrary ordering of its edges and self-loops, we let $Z =[z_{ij}]$ be the associated {\em incidence matrix}, defined as the $|U| \times |F|$ matrix with entries:
\begin{equation}\label{eq:defZS}
z_{ij} := \frac{1}{2}
\begin{cases}
    2, & \text{if } f_j\in F_0 \text{ is a loop on node } u_i,       \\
    1, & \text{if node } u_i \text{ is incident to } f_j\in F_1, \\
    0, & \text{otherwise}.
\end{cases}
\end{equation}
Note that the columns of $Z$ are probability vectors. We now introduce the edge polytope:

\begin{definition}[Edge polytope~\cite{ohsugi1998normal}]\label{def:edgepolytope}
Let $S = (U,F)$ be a skeleton graph and $Z$ be the associated incidence matrix. Let $z_j$, for $1\leq j \leq |F|$, be the columns of $Z$.  
The {\em edge polytope} of $S$, denoted by $\cX(S)$, is the finitely generated convex hull: 
\begin{equation}\label{eq:defXS}
\cX(S):= \conv\{z_j \mid j = 1,\ldots, |F|\}.
\end{equation}
\end{definition}

It is known~\cite{ohsugi1998normal} that if a connected $S$ has an odd cycle (i.e., a cycle of odd length including a self-loop), then the rank of $\cX(S)$ is full, i.e., $(q-1)$. Otherwise, the rank of $\cX(S)$ is $(q-2)$.   

\subsection{Conditions for the $H$-property}\label{ssec:condHprop}
We start by introducing a set of conditions which will be critical for deciding whether or not a step-graphon $W$ has the $H$-property. 
Let $\sigma$ be a partition for $W$, and let $S$ and $x^*$ be the associated skeleton graph and concentration vector. 
For simplicity, we assume in the sequel that $S$ is connected (in general, one needs to apply the conditions below for each connected component of $S$). We state here without a proof that if $S$ is connected, then $G_n\sim W$ is also connected almost surely as $ n \to \infty$.

We now state the conditions:    

\vspace{.2cm}
\noindent {\bf Condition 1:} The graph $S$ has an odd cycle.

\vspace{.2cm}
\noindent {\bf Condition 2A:} The vector $x^*$ belongs to the edge polytope of $S$, i.e.,  $x^* \in \cX(S)$.

\vspace{.2cm}
\noindent {\bf Condition 2B:} The vector $x^*$ belongs to the {\em relative interior} of the edge polytope of $S$, i.e.,  $x^* \in \rint \cX(S)$.
\vspace{.2cm}

The following result has been established in~\cite{bcb2021h}: 

\begin{theorem}\label{thm:main}
Let $W$ be a step-graphon with $\sigma$ a partition.  
Let $S$ and $x^*$ be the associated (connected) skeleton graph   and concentration vector, respectively. 
Let $G_n\sim W$ and $\vec G_n$ be the directed version of $G_n$. If {\em either} Condition 1 {\em or} Condition 2A is {\em not} satisfied, then 
 \begin{equation}\label{eq:nonHproperty}
 \lim_{n\to\infty}\mathbb{P}(\vec G_n \mbox{  has a Hamiltonian decomposition}) = 0.
 \end{equation}
\end{theorem}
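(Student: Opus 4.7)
The plan is to deduce both cases of the theorem from a single structural lemma about Hamiltonian decompositions. The lemma is: \emph{if $\vec G_n$ admits a Hamiltonian decomposition $\vec H$, then $x(G_n)\in \cX(S)$.} To prove it, first observe that every edge of $G_n$ projects under the map $\pi$ from~\eqref{eq:defpi} to an edge of $S$, since $W$ vanishes on any rectangle $[\sigma_{i-1},\sigma_i)\times[\sigma_{j-1},\sigma_j)$ whose pair $(u_i,u_j)$ is not in $F$. For each $f_j\in F$, let $m_j$ be the number of directed edges of $\vec H$ projecting to $f_j$. Since $\vec H$ has exactly $n$ directed edges (each node has in- and out-degree one), $\sum_j m_j = n$, so $m/n$ is a probability vector. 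Counting endpoints at each skeleton node $u_i$, the $n_i$ vertices of $\vec H$ lying above $u_i$ contribute $2n_i$ endpoint-incidences (one from the in-edge and one from the out-edge of each vertex); a directed edge projecting to a loop at $u_i$ contributes $2$, while a directed edge projecting to a non-loop incident to $u_i$ contributes $1$. This gives $2n_i=\sum_j 2z_{ij}m_j$ using the $1/2$-convention in~\eqref{eq:defZS}, whence $(n_1,\ldots,n_q)^\top = Zm$ and $x(G_n) = Z(m/n)\in \cX(S)$.

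The case where Condition~2A fails follows at once: if $x^*\notin \cX(S)$, then by closedness of $\cX(S)$ there exists $\epsilon>0$ with the $\epsilon$-ball around $x^*$ disjoint from $\cX(S)$. The Chebyshev bound~\eqref{eq:oldlemma1} yields $\Prb(\|x(G_n)-x^*\|>\epsilon)\to 0$, so $\Prb(x(G_n)\in \cX(S))\to 0$, and the lemma then gives~\eqref{eq:nonHproperty}.

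When Condition~1 fails the skeleton $S$ is bipartite and loopless; fix a bipartition $(A,B)$ of $U$. Every column of $Z$ has exactly one $1/2$-entry indexed by $A$ and one by $B$, so every point of $\cX(S)$ satisfies $\sum_{i\in A} x_i = 1/2$. By the lemma, the existence of a Hamiltonian decomposition would force the discrete event $\sum_{i\in A} n_i(G_n) = n/2$. If $\sum_{i\in A} x^*_i\neq 1/2$ this probability tends to $0$ by~\eqref{eq:oldlemma1}. If instead $\sum_{i\in A} x^*_i = 1/2$, then $\sum_{i\in A} n_i$ is binomial$(n,1/2)$ and a Stirling (or local central limit) estimate gives $\Prb(\sum_{i\in A} n_i = n/2)=O(n^{-1/2})\to 0$. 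In either situation~\eqref{eq:nonHproperty} holds.

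The principal obstacle is the structural lemma in the first paragraph, and within it the careful bookkeeping that distinguishes loops from ordinary edges through the factor $1/2$ appearing in~\eqref{eq:defZS}; once the identity $(n_1,\ldots,n_q)^\top = Zm$ is secured, the remainder is routine concentration.
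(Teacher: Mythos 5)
The paper does not actually prove Theorem~\ref{thm:main}; it quotes it from~\cite{bcb2021h}, so there is no in-paper proof to compare against. Judged on its own, your argument is correct and complete. The structural lemma is the right pivot: the endpoint-incidence count $2n_i=\sum_j 2z_{ij}m_j$ is exactly the bookkeeping that the factor $1/2$ in~\eqref{eq:defZS} is designed for, and it yields $(n_1,\ldots,n_q)^\top=Zm$ with $m/n$ a probability vector, hence $x(G_n)\in\cX(S)$ whenever a Hamiltonian decomposition exists. The Condition~2A case then follows from compactness of $\cX(S)$ plus~\eqref{eq:oldlemma1}, and the Condition~1 case from the observation that a loopless bipartite skeleton forces $\sum_{i\in A}x_i=1/2$ on all of $\cX(S)$, combined with the $O(n^{-1/2})$ anti-concentration of the binomial at its mean (and probability zero for odd $n$). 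It is worth noting that your lemma strictly generalizes the one argument of this type that the paper does spell out: Case~1 of the proof of Proposition~\ref{prop:borderline} shows by a cycle-by-cycle count that a Hamiltonian decomposition forces $n_1\le n_2$ for the graphon~\eqref{eq:Wborderline}, which is precisely the statement $x(G_n)\in\cX(S)$ for that skeleton. Your version replaces the ad hoc cycle count with the incidence-matrix identity, which buys uniformity over all skeleton graphs and handles both failure modes of the hypotheses from a single source.
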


We also claimed in~\cite{bcb2021h} that if both Condition 1 and Condition 2B are satisfied, then 
\begin{equation}\label{eq:Hproperty}
 \lim_{n\to\infty}\mathbb{P}(\vec G_n \mbox{  has a Hamiltonian decomposition}) = 1.
\end{equation}
A proof of this fact will be provided in a future publication, but we illustrate it in Section~\ref{sec:casestudies} for the special case where $W$ is a line graphon. We also point out that the borderline case between Condition 2A and Condition 2B, i.e.  when $x^* \in \cX(S)$ but $x^* \notin \rint \cX(S)$, is precisely the one for which the $H$-property is {\em not} a zero-one property.

\section{The $H$-property for line graphons}\label{sec:casestudies}
In this section, we will focus on a special case, namely, step-graphons whose skeleton graphs are line graphs (with a self-loop at one of the ending nodes). To carry out analysis, we need some preliminaries about \er random graphs.

\subsection{On Erd\H{o}s-R\'enyi graphs}\label{ssec:er}

An Erd\H{o}s-R\'enyi random graph $R(n,p)=(V,E)$ on $n$  nodes $V = \{v_1,\ldots,v_n\}$ with parameter $p\in [0,1]$ is a random graph obtained as follows: 
The existences of edges between pairs of distinct nodes are independent, identically distributed Bernoulli random variables with parameter $p$, i.e., $$\mathbb{P}((v_i,v_j) \in E) = p \mbox{ for all } 1 \leq i<j\leq n.$$
We first have the following elementary result:

\begin{lemma}\label{lem:almostsuretriangle}
Let $R(n,p)$ be an Erd\H{o}s-R\'enyi random graph with $p>0$. Then, $R(n,p)$ contains a triangle (which is a complete graph on three nodes without self-loops) almost surely as $n \to \infty$.
\end{lemma}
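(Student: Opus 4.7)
The plan is to bound from above the probability of the complementary event, that $R(n,p)$ contains \emph{no} triangle, and show that this bound tends to zero. The cleanest route is to reduce to a product of independent Bernoulli events by restricting attention to a family of vertex-disjoint candidate triangles.

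First, I would partition the node set $V = \{v_1,\ldots,v_n\}$ into $m := \lfloor n/3 \rfloor$ disjoint triples $T_1,\ldots,T_m$ (say $T_k = \{v_{3k-2},v_{3k-1},v_{3k}\}$), discarding the at most two leftover nodes. For each $k$, let $A_k$ be the event that all three edges on $T_k$ are present in $R(n,p)$. By the definition of the \er model, $\mathbb{P}(A_k) = p^3$. Crucially, because the triples are pairwise vertex-disjoint, the edge sets they span are disjoint, so the events $A_1,\ldots,A_m$ are mutually independent.

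Second, I would observe that if $R(n,p)$ contains no triangle, then in particular none of the $A_k$ occurs. By independence,
$$
\mathbb{P}(R(n,p) \text{ has no triangle}) \leq \mathbb{P}\Big(\bigcap_{k=1}^{m} A_k^c\Big) = (1-p^3)^m.
$$
Since $p>0$, we have $0 \leq 1 - p^3 < 1$, and with $m = \lfloor n/3 \rfloor \to \infty$, the right-hand side tends to $0$ as $n \to \infty$. Taking complements yields $\mathbb{P}(R(n,p) \text{ contains a triangle}) \to 1$, which is the claim.

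There is no real obstacle here; the only subtlety is to make sure one picks \emph{vertex-disjoint} triples so that the three-edge events are genuinely independent rather than merely pairwise uncorrelated. (An equivalent but more laborious route would be a second moment computation on the number of triangles $X$, verifying $\mathbb{E}[X] = \binom{n}{3}p^3 \to \infty$ and $\operatorname{Var}(X)/\mathbb{E}[X]^2 \to 0$, but the disjoint-triples argument is strictly shorter and entirely elementary.)
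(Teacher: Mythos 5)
Your proof is correct and follows essentially the same route as the paper: both partition the vertex set into $\lfloor n/3\rfloor$ disjoint triples, use the independence of the resulting edge-disjoint triangle events to bound the no-triangle probability by $(1-p^3)^{\lfloor n/3\rfloor}$, and let $n\to\infty$. No discrepancies to report.
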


\begin{proof}
Denote by ${\cal K}$ the event that $R(n,p)$ contains at least one triangle, and by $\bar {\cal K}$ the complementary  event that  it contains {\em no} triangles; clearly, $\mathbb{P}(\bar {\cal K}) = 1 - \mathbb{P}( {\cal K})$. 
Furthermore, denote by $\bar {\cal F}$ the event that $R(n,p)$ is such {\em no} triple of consecutive nodes $(v_{3i+1},v_{3i+2},v_{3i+3})$, for $0\leq i  \leq \lfloor n/3 \rfloor -1$, is  a triangle. Observe that if $R(n,p)$ contains no triangle, then obviously consecutive triples of nodes cannot be triangles, i.e., $\bar {\cal K} \subseteq \bar {\cal F}$.  
Now, since the presence of each individual edge in $R(n,p)$ is an independent event, the probability that $(v_{3i+1},v_{3i+2},v_{3i+3})$ does not form a triangle is $(1-p^3)$. Relying again on the independence, we see that this probability is the same  for every triple $(v_{3i+1},v_{3i+2},v_{3i+3})$. 
Since these triples are pairwise disjoint, the events that they form triangles are also independent of each other. Thus, the probability of the event $\bar {\cal F}$ is $(1-p^3)^{\lfloor n/3 \rfloor}$. Since $p>0$, this probability vanishes as $n \to \infty$. Consequently, $\mathbb{P}(\bar {\cal K}) \leq \mathbb{P}(\bar {\cal F}) \to 0$ and $\mathbb{P}({\cal K}) \to 1$. This completes the proof. 
\end{proof}

We next recall that a {\em bipartite graph}~\cite{diestel2012graph} $B = (V, E)$ is an undirected graph whose node set $V$ admits a partition into two disjoint subsets $V_L$ and $V_R$ such that nodes in $V_L$ (resp. $V_R$) have no edge between them.  

A {\em perfect matching} $P$ in a bipartite graph is a subset of its edge set so that each node is incident to {\em exactly one} edge in the subset $P$ (if a perfect matching exists, then it is necessary that $|V_L|=|V_R| = |P|$). 

When $|V_L|\leq |V_R|$, we define a {\em left-perfect matching}  as a subset of $|V_L|$ edges that are incident to all nodes in $V_L$ and so that each node in $V_R$ is incident to {\em at most} one edge. 

Note that a perfect matching $P$ in a bipartite graph $B$ with $|V_L| = |V_R| = n$ gives rise to a Hamiltonian decomposition in $\vec B$, the directed version of $B$. Indeed,  the two oppositely directed edges that replace an edge in $P$ form a two-cycle in $\vec B$. Since $P$ is a perfect matching, these two-cycles are pairwise disjoint and, moreover, cover all of the nodes in $\vec B$.   
 
We can easily adapt the notion of  \er random graphs to the class of bipartite graphs. Specifically, an Erd\H{o}s-R\'enyi random bipartite graph, denoted by $B(n,m,p)$,  with $|V_L|=n$ and $|V_R|=m$, has an edge set obtained as follows: 
The probability of having an edge between any node in $V_L$ and any node in $V_R$ is $p$, and the events of having such edges are independent.
     
We have the following fact as a corollary of Erd\H{o}s and R\'enyi~\cite[Theorem~2]{erdos1964random} and its proof is omitted due to space limitation.

\begin{lemma}\label{lem:erdosperfectmatching} Let $p \in (0,1]$ be a constant and $B(n,m,p)$ be a random bipartite graph, with $n\leq m$. Then, the probability that $B(n,m,p)$ contains a left-perfect matching is one as $n \to \infty$. 
\end{lemma}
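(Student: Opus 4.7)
The plan is to reduce the general case $n \leq m$ to the balanced case $n = m$ and then appeal to the Erd\H{o}s-R\'enyi result cited in the statement. For the reduction, fix any deterministic subset $V_R' \subseteq V_R$ of size $n$. By construction of $B(n,m,p)$, the edges between $V_L$ and $V_R'$ are i.i.d.\ Bernoulli$(p)$, so the induced subgraph on $V_L \cup V_R'$ is distributed exactly as $B(n,n,p)$. Any perfect matching in this induced subgraph is also a left-perfect matching in $B(n,m,p)$: every node of $V_L$ is matched to some node of $V_R' \subseteq V_R$, and nodes in $V_R \setminus V_R'$ are simply left unmatched. Hence
\[
\Pr\bigl(B(n,m,p) \text{ has a left-perfect matching}\bigr) \geq \Pr\bigl(B(n,n,p) \text{ has a perfect matching}\bigr),
\]
which reduces the lemma to the balanced case.

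For the balanced case, the plan is to invoke Theorem~2 of~\cite{erdos1964random}, which gives the sharp threshold $p \sim \ln n / n$ for the existence of a perfect matching in a random bipartite graph on $n + n$ nodes. Since our $p \in (0,1]$ is a fixed positive constant, it lies well above this threshold, so $B(n,n,p)$ contains a perfect matching almost surely as $n \to \infty$. Combined with the inequality above, this yields the claim.

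Should a self-contained treatment of the balanced case be desired, the natural route is Hall's marriage theorem together with two union bounds (over the size $k$ of a putative Hall-violating set $S \subseteq V_L$ and over the choice of $S$ itself), giving
\[
\Pr(\text{Hall fails}) \leq \sum_{k=1}^n \binom{n}{k}\binom{n}{k-1}(1-p)^{k(n-k+1)}.
\]
The main (and essentially the only) obstacle is to show this sum tends to zero. This is routine: the factor $(1-p)^{k(n-k+1)}$ decays exponentially with exponent at least $kn/2$ for $k \leq n/2$, easily dominating the polynomial-in-$n$ binomial pre-factors, and by the symmetry of $k(n-k+1)$ about $k = (n+1)/2$, an analogous bound handles $k > n/2$. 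The key simplification afforded by the reduction is that no extra parameter $m$ needs to be tracked through this asymptotic estimate.
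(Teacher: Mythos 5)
Your proposal is correct and follows the same route the paper intends: the paper states this lemma as a corollary of \cite[Theorem~2]{erdos1964random} with the proof omitted, and your main argument is exactly that derivation, made explicit by restricting to a deterministic $n$-subset of $V_R$ to reduce to the balanced case. Your supplementary Hall's-theorem union bound is also sound (the summand is symmetric under $k \mapsto n+1-k$, so the tail $k > n/2$ is indeed handled by the same estimate), but it goes beyond what the paper records.
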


The following result is then a corollary of Lemmas~\ref{lem:almostsuretriangle} and~\ref{lem:erdosperfectmatching}:

\begin{corollary}\label{cor:erhamiltoniandec}
Let $R(n,p) = (V, E)$ be an Erd\H{o}s-R\'enyi random graph with $p>0$. Then, $\vec R(n,p)$ contains a Hamiltonian decomposition almost surely as $n \to \infty$.
\end{corollary}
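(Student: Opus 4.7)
The plan is to build a Hamiltonian decomposition of $\vec R(n,p)$ out of directed $2$-cycles (from a perfect matching, supplied by Lemma~\ref{lem:erdosperfectmatching}) together with, when $n$ is odd, a single $3$-cycle (from a triangle, supplied by Lemma~\ref{lem:almostsuretriangle}). The argument then splits on the parity of $n$.

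For $n$ even, I would partition $V = V_L \sqcup V_R$ with $|V_L| = |V_R| = n/2$. By independence of edges in $R(n,p)$, the induced bipartite subgraph is distributed as $B(n/2, n/2, p)$, so Lemma~\ref{lem:erdosperfectmatching} supplies a perfect matching almost surely as $n \to \infty$. As observed just before that lemma, this matching lifts to pairwise disjoint directed $2$-cycles in $\vec R(n,p)$ that cover $V$, hence a Hamiltonian decomposition.

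For $n$ odd, I would reuse the disjoint-triples device from the proof of Lemma~\ref{lem:almostsuretriangle}: group a prefix of $3m$ vertices into consecutive disjoint triples $T_1, \ldots, T_m$ (with $m\to\infty$ chosen below), and let $J$ be the smallest index with $T_J$ a triangle ($J = \infty$ if none). The proof of Lemma~\ref{lem:almostsuretriangle} gives $\mathbb{P}(J < \infty) \to 1$. On $\{J = j\}$, I remove the three vertices of $T_j$ and bipartition the remaining $n - 3$ (even) vertices into halves $V_L, V_R$ of size $(n-3)/2$ in such a way that each surviving triple $T_i$ ($i \neq j$) sits entirely on one side; the vertices outside any triple are used to balance the two halves, and $m$ is chosen small enough that sufficiently many such ``leftover'' vertices are available. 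Since $\{J = j\}$ constrains only the intra-triple edges of $T_1, \ldots, T_j$, and none of those edges cross the bipartition, the cross-edges remain i.i.d. $\mathrm{Bernoulli}(p)$ conditionally on $\{J = j\}$. Thus the bipartite subgraph on $(V_L, V_R)$ is distributed as $B((n-3)/2, (n-3)/2, p)$, and by Lemma~\ref{lem:erdosperfectmatching} it has a perfect matching with conditional probability tending to $1$ uniformly in $j$. Combining the $3$-cycle on $T_J$ with these $2$-cycles and summing over $j$ via the law of total probability yields $\mathbb{P}(\vec R(n,p) \text{ has a Hamiltonian decomposition}) \to 1$.

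The main obstacle is the conditioning in the odd case: fixing three vertices in advance as the triangle candidate succeeds with only the constant probability $p^3$, so I must let the data choose the triangle while keeping the matching side essentially untouched. Designing the bipartition to respect the triple structure is exactly what decouples the triangle search of Lemma~\ref{lem:almostsuretriangle} from the bipartite matching of Lemma~\ref{lem:erdosperfectmatching}; the even case is then just the degenerate version of the argument without the triangle step.
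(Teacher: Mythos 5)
Your proposal is correct and follows essentially the same route as the paper: a perfect matching supplying disjoint $2$-cycles when $n$ is even, plus one triangle contributing a $3$-cycle when $n$ is odd. The only difference is that you explicitly handle the conditioning between the triangle search and the bipartite matching (via the disjoint-triples/first-success device and a bipartition that keeps intra-triple edges off the cut), a subtlety the paper's proof passes over without comment; your version is the more careful one.
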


\begin{proof}
Consider the following two cases for the parity of $n$: 
\vspace{.1cm}

\noindent
{\em Case 1: $n$ is even.} In this case, splitting the node set of $R(n,p)$ arbitrarily into two subsets of cardinality $n/2$, we see that $R(n,p)$  contains an \er random bipartite graph $B(n/2, n/2, p)$ as a subgraph on the same node set $V$. Thus, by Lemma~\ref{lem:erdosperfectmatching}, $R(n,p)$ contains a perfect matching $P$ almost surely as $n\to\infty$. Replacing every edge in $P$ with two oppositely directed edges, we obtain a Hamiltonian decomposition $\vec P$ in $\vec R(n,p)$. 

\vspace{.1cm}
\noindent
{\em Case 2: $n$ is odd.} This case is slightly more complicated as there does not exist a perfect matching that covers all the nodes of $R(n,p)$. 
To resolve the issue, we take a two-step approach: (1) By Lemma~\ref{lem:almostsuretriangle}, we know that $R(n,p)$ contains a triangle~$K_3$ as a subgraph almost surely as $n\to\infty$; (2)  The subgraph $R'$ of $R(n,p)$ induced by the nodes that are {\em not} in the triangle thus has an even number $(n - 3)$ of nodes. 
Using the same arguments as for Case 1, we have that $R'$ has a perfect matching $P'$ almost surely as $n\to\infty$. In this way, the triangle $K_3$ and the matching $P'$ are disjoint and, together, they  cover all the nodes of $R(n,p)$. Moving from undirected to pairs of oppositely directed edges as done in case~1, we have that $\vec R(n,p)$ admits a Hamiltonian decomposition,  formed by a directed triangle in $\vec K_3$ and all the two cycles in $\vec P'$. 
\end{proof}

\begin{figure}[t]
    \centering
    \subfloat[\label{sfig1:stepgraphon}]{
\includegraphics{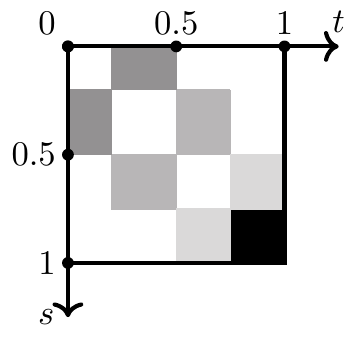}
}
\qquad
\subfloat[\label{sfig1:skeleton}]{
 \includegraphics{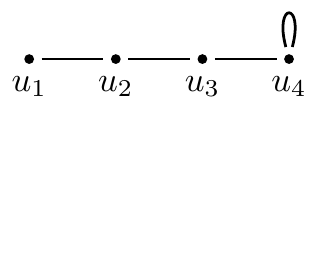}
}
\caption{ {\em Left:} A step-graphon $W$ with the partition $\sigma=(0,0.2,0.5,0.75,1)$. {\em Right:} The associated skeleton graph $S$.}
    \label{fig:linegraphon}
\end{figure}

\subsection{Line Graphons}\label{ssec:linegraphon}
We  consider graphons $W$ whose skeleton graphs $S$ are line graphs with a single self-loop attached on one of the ending nodes (note that if there is no self-loop, then by Theorem~\ref{thm:main}, $W$ does not have the $H$-property).  See Fig.~\ref{fig:linegraphon} for illustration.  For convenience, we call such graphons {\em line graphons}.

\begin{proposition}\label{prop:hproplineG}
Let $W$ be a line graphon (so Condition~1 is satisfied). If Condition~2B is satisfied, then $W$ has the $H$-property.   
\end{proposition}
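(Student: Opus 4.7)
The plan is to exploit the very restrictive structure of a line graphon to produce an explicit allocation of the $n$ sampled nodes that assigns, to each non-loop edge of the skeleton, a balanced bipartite block on which we apply Lemma~\ref{lem:erdosperfectmatching}, and to the self-loop a residual block on which we apply Corollary~\ref{cor:erhamiltoniandec}. Gluing the resulting $2$-cycles and cycles yields the desired Hamiltonian decomposition of $\vec G_n$.

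First I would parametrize the condition $x^* \in \rint \cX(S)$. Label the edges of $S$ as $f_i = \{u_i,u_{i+1}\}$ for $i=1,\ldots,q-1$ and $f_q$ the self-loop at $u_q$, and let $z_1,\ldots,z_q$ be the corresponding columns of $Z$. Because $S$ contains an odd cycle (the self-loop), $\cX(S)$ has rank $q-1 = |F|-1$, so the columns $z_j$ are affinely independent and $\cX(S)$ is a $(q-1)$-simplex. Consequently $x^* = \sum_j t_j z_j$ is a unique barycentric representation, and Condition~2B is equivalent to $t_j > 0$ for every $j$. Reading off the equations from the shape of the $z_j$ gives $t_1 = 2x^*_1$, $t_i = 2x^*_i - t_{i-1}$ for $2\leq i\leq q-1$, and $t_q = x^*_q - t_{q-1}/2$.

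Next, let $G_n\sim W$ and let $n_i$ be the number of type-$i$ nodes. Define integers $a_1 := n_1$, $a_i := n_i - a_{i-1}$ for $2\leq i\leq q-1$, and $a_q := n_q - a_{q-1}$. Chebyshev's inequality~\eqref{eq:oldlemma1} gives $n_i/n \to x^*_i$ in probability, and a straightforward induction then yields $a_i/n \to t_i/2$ for $i\leq q-1$ and $a_q/n \to t_q$. By the strict positivity of all $t_j$ just established, with probability tending to one every $a_i$ is positive and both $a_{q-1}$ and $a_q$ diverge with $n$. Conditionally on the coordinates $\{y_j\}$, the edges of $G_n$ are independent Bernoulli variables whose parameters depend only on the pair of types, so we may freely split the $n_i$ type-$i$ nodes (for $2\leq i\leq q-1$) into two subsets of sizes $a_{i-1}$ and $a_i$, assigning the first to edge $f_{i-1}$ and the second to edge $f_i$; the $n_1$ type-$1$ nodes are assigned entirely to $f_1$, while the $n_q$ type-$q$ nodes are split into a block of size $a_{q-1}$ reserved for $f_{q-1}$ and a block of size $a_q$ reserved for the self-loop $f_q$. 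A direct count shows that each sampled node is used exactly once in this allocation.

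With this allocation, each $f_i$ with $i<q$ carries an Erd\H{o}s-R\'enyi random bipartite graph $B(a_i,a_i,p_i)$ whose density $p_i>0$ is the value of $W$ on the corresponding rectangle, and the self-loop block is an Erd\H{o}s-R\'enyi random graph $R(a_q,p_q)$ with $p_q>0$. Lemma~\ref{lem:erdosperfectmatching} then yields, with probability tending to one, a perfect matching in each $B(a_i,a_i,p_i)$, contributing $a_i$ disjoint $2$-cycles to $\vec G_n$; Corollary~\ref{cor:erhamiltoniandec} yields, again with probability tending to one, a Hamiltonian decomposition of $\vec R(a_q,p_q)$. A union bound over the finitely many (namely $q+1$) almost-sure events and the concentration event controlling the $a_i$ concludes that the disjoint union of all these $2$-cycles and cycles is a Hamiltonian decomposition of $\vec G_n$. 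The only delicate point is the positivity of the $a_i$: it would fail precisely on the boundary $x^*\in \cX(S)\setminus \rint\cX(S)$, which is exactly where Condition~2B is indispensable; everything else is a direct assembly of the Erd\H{o}s-R\'enyi tools collected in Subsection~\ref{ssec:er}.
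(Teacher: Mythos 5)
Your proposal is correct and is essentially the paper's own argument: your $a_k=\sum_{\ell=0}^{k-1}(-1)^\ell n_{k-\ell}$ are exactly the leftover counts $n'_k$ in the paper, your barycentric coefficients $t_j>0$ reproduce Lemma~\ref{lem:xiinequalities}, and the decomposition into bipartite matching blocks for the non-loop edges plus one Erd\H{o}s-R\'enyi block for the self-loop, handled via Lemma~\ref{lem:erdosperfectmatching} and Corollary~\ref{cor:erhamiltoniandec}, is the same. The only (cosmetic) difference is that you pre-allocate balanced blocks $B(a_i,a_i,p_i)$ of prescribed sizes and take perfect matchings, whereas the paper peels off left-perfect matchings sequentially in $B(n'_k,n_{k+1},p^{k,k+1})$; just note that applying Lemma~\ref{lem:erdosperfectmatching} to every block requires \emph{all} $a_i\to\infty$ (not only $a_{q-1}$ and $a_q$), which indeed follows from $a_i/n\to t_i/2>0$.
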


To establish Proposition~\ref{prop:hproplineG}, we first express the incidence matrix $Z$ of the skeleton graph $S$ as follows: 
\begin{equation}\label{eq:Zline}
Z=\frac{1}{2} \begin{bmatrix}
1 & 0 & 0 & 0 &\cdots & 0 & 0 & 0\\
1 & 1 & 0 & 0 & \cdots & 0 & 0 & 0\\
0 & 1 & 1 & 0 & \cdots & 0 & 0 & 0\\
0 & 0 & 1 & 1 &  & 0 & 0 & 0\\
\vdots & \vdots &   &  & \ddots  &  &\vdots  & \vdots\\
0 & 0 & 0 & 0 &  & 1 & 0 & 0\\
0 & 0 & 0 & 0 &  & 1 & 1 & 0\\
0 & 0 & 0 & 0 & \cdots & 0  & 1 & 2\\
\end{bmatrix}
\end{equation}
We need the following lemma:

\begin{lemma}\label{lem:xiinequalities}
If $x=(x_1,\ldots,x_q) \in \rint \cX(S)$, then the entries of $x$ satisfy the following inequalities: 
\begin{equation}\label{eq:ineqline}
\sum_{\ell = 0}^{k-1} (-1)^\ell x_{k - \ell} > 0, \quad \forall k = 1,\ldots, q. 
\end{equation}
\end{lemma}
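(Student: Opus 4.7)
The plan is to identify $\cX(S)$ as a simplex and then invert the linear map $y \mapsto Zy$ explicitly. The key structural observation is that the skeleton graph of a line graphon satisfies Condition~1, so by the fact recalled after Definition~\ref{def:edgepolytope} the edge polytope has full rank $q-1$. Since $Z$ has exactly $q$ columns (one per edge of $S$, namely $q-1$ line edges plus one self-loop), the affine dimension equals the number of vertices minus one, which forces the columns $z_1,\ldots,z_q$ to be affinely independent. Hence $\cX(S)$ is a $(q-1)$-simplex and every $x \in \cX(S)$ admits a \emph{unique} barycentric representation $x = Zy$ with $y = (y_1,\ldots,y_q)$ a probability vector. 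Moreover, $x \in \rint \cX(S)$ if and only if $y_j > 0$ for every $j$.

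Next, I would solve the linear system $x = Zy$ explicitly using the bidiagonal structure of $Z$ displayed in~\eqref{eq:Zline}. Reading off the rows from top to bottom yields the recursion
\begin{equation*}
y_1 = 2 x_1, \qquad y_k = 2 x_k - y_{k-1} \ \ (2 \le k \le q-1), \qquad y_q = x_q - \tfrac{1}{2} y_{q-1}.
\end{equation*}
A routine induction on $k$ then gives, for $1 \le k \le q-1$,
\begin{equation*}
\tfrac{1}{2} y_k \;=\; \sum_{\ell=0}^{k-1}(-1)^\ell x_{k-\ell},
\end{equation*}
and a final substitution in the last equation yields $y_q = \sum_{\ell=0}^{q-1}(-1)^\ell x_{q-\ell}$.

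The conclusion is immediate: the hypothesis $x \in \rint \cX(S)$ forces $y_k > 0$ for every $k = 1,\ldots,q$ by the uniqueness of the barycentric representation, and substituting the closed-form expressions above produces exactly the claimed inequalities~\eqref{eq:ineqline}. No step here is a serious obstacle; the only point requiring attention is the uniqueness of the representation $x = Zy$, which is why it is worth flagging up front that $\cX(S)$ is a simplex. Once that is in hand, the rest is just unwinding the triangular linear system.
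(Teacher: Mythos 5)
Your proof is correct and follows essentially the same route as the paper: both express $x$ as a convex combination $x=Zy$ with strictly positive coefficients and observe that the alternating sums $\sum_{\ell=0}^{k-1}(-1)^\ell x_{k-\ell}$ recover (up to the factor $\tfrac12$) the coefficients $y_k$, the only difference being that you invert the bidiagonal system while the paper telescopes directly. Your explicit justification that $\cX(S)$ is a $(q-1)$-simplex, so that the positive barycentric representation is forced by uniqueness, is a small but welcome refinement of a step the paper leaves implicit.
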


\begin{proof}
Since $x\in \rint \cX(S)$, one can write $x = \sum^q_{\ell = 1} \alpha_\ell z_\ell$ where $z_\ell$ is the $\ell$th column of the matrix $Z$ in~\eqref{eq:Zline}, and $ 0 < \alpha_\ell < 1$, for all $\ell = 1, \ldots, q$. In particular, 
$$
x_\ell = \frac{1}{2}
\begin{cases}
\alpha_1 & \mbox{if } \ell = 1, \\
\alpha_{\ell - 1} + \alpha_\ell & \mbox{if } 1 < \ell < q, \\
\alpha_{q-1} + 2\alpha_q & \mbox{if } \ell = q.
\end{cases}
$$
It then follows that for any $k = 1,\ldots, q$,
$$
\sum_{\ell = 0}^{k-1} (-1)^\ell x_{k - \ell} = 
\frac{1}{2}
\begin{cases}
\alpha_k & \mbox{if } 1\leq k < q, \\
2\alpha_q & \mbox{if } k = q,
\end{cases}
$$
which is positive. 
This establishes~\eqref{eq:ineqline}.
\end{proof}

We can now prove Proposition~\ref{prop:hproplineG}:

\begin{proof}[Proof of Proposition~\ref{prop:hproplineG}]
Recall that for a given $G_n\sim W$, $n_i = |\pi^{-1}(u_i)|$, where $\pi$ is defined in~\eqref{eq:defpi}. On the one hand, using~\eqref{eq:oldlemma1}, we have that $n_i/n$ converges to $x_i$ as $n \to \infty$. On the other hand, from Lemma~\ref{lem:xiinequalities}, we have the inequalities~\eqref{eq:ineqline}. These two facts imply that almost surely as $n\to \infty$, we have  
\begin{equation}\label{eq:ninequalities}
    \sum_{\ell = 0}^{k-1} (-1)^\ell n_{k - \ell} > 0, \quad \forall k = 1,\ldots, q. 
\end{equation}
Thus, in the sequel, we can assume that the above inequalities are satisfied.  
We  claim that $\vec G_n$ admits a Hamiltonian decomposition almost surely as $n\to \infty$. 
If the claim is true, then $W$ has the $H$-property.  

We now proceed with the proof of the claim. 
For convenience, let $V_i:= \pi^{-1}(u_i)$ for $i = 1,\ldots, q$. 
To this end, consider the subgraph $G^{1,2}_n$ of $G_n$ induced by $V_1 \cup V_2$. Let $\sigma = (\sigma_0,\ldots,\sigma_q)$ be the partition for $W$ and $p^{1,2}$ be the value of $W$ over the rectangle $[\sigma_0,\sigma_1)\times [\sigma_1,\sigma_2)$. Note that $p^{1,2}$ is strictly positive (because otherwise there will be no edge $(u_1,u_2)$ in the skeleton graph). 
By construction,  it should be clear that $G^{1,2}_n$ is an Erd\H{o}s-R\'enyi random bipartite graph $B(n_1,n_2,p^{1,2})$. 
Denote by ${\cal E}^{1,2}_n$ the event that $G^{1,2}_n$ admits a left-perfect matching. 
Because $n_1 < n_2$ by~\eqref{eq:ninequalities} and because $n_1\to \infty$ as $n\to \infty$ (since $n_1/n$ converges to $x_1 > 0$), we know from Lemma~\ref{lem:erdosperfectmatching} that ${\cal E}^{1,2}_n$ is true almost surely as $n\to \infty$. In the sequel, we condition  on the event ${\cal E}^{1,2}_n$ and fix a left-perfect matching $P^{1,2}_n$  in $G^{1,2}_n$.
Let $G'^{1,2}_n$ be the subgraph of $G^{1,2}_n$ induced by $P^{1,2}_n$ (more precisely, induced by the nodes incident to edges in $P^{1,2}_n$). Then, by construction, $P^{1,2}_n$ is a {\em perfect matching} of $G'^{1,2}_n$. 
As argued in Subsection~\ref{ssec:er}, if we let $\vec P^{1,2}_n$ be the subset of edges in $\vec G'^{1,2}_n$ obtained by replacing every undirected edge in $P^{1,2}_n$ with two oppositely directed edges, then $\vec P^{1,2}_n$ gives rise to a Hamiltonian decomposition of $\vec G'^{1,2}_n$ which is comprised only of two-cycles.

Denote by $V'_2$ the set of nodes in $V_2$ that are {\em not} incident to edges in $P^{1,2}_n$. Let $n_2':= |V'_2| = n_2 - n_1 > 0$. 
Similarly, define the subgraph $G^{2,3}_n$ of $G_n$ induced by $V'_2\cup V_3$. It is an Erd\H{o}s-R\'enyi random bipartite graph $B(n'_2,n_3,p^{2,3})$ where $p^{2,3}> 0$ is the value of $W$ over the rectangle $[\sigma_1,\sigma_2)\times [\sigma_2,\sigma_3)$. By~\eqref{eq:ninequalities}, we have that $n_3 - n'_2 = n_3 - n_2 + n_1 > 0$. Let ${\cal E}^{2,3}_n$ be the event that $G^{2,3}_n$ admits a left-perfect matching. Using the same arguments as above, we know that ${\cal E}^{2,3}_n$ is true almost surely as $n\to \infty$. Fix a left-perfect matching $P^{2,3}_n$ of $G^{2,3}_n$. 
Let $G'^{2,3}_n$ be the subgraph of $G^{2,3}_n$ induced by $P^{2,3}_n$, which admits $P^{2,3}_n$ as a {\em perfect matching}. Consequently, $\vec P^{2,3}_n$ yields a Hamiltonian decomposition of $\vec G'^{2,3}_n$.

One can repeat the above arguments as follows: Given a left-perfect matching $P^{k-1,k}_n$, for $1\leq k \leq q-1$, we let $V'_k$ be the subset of $V_k$ such that nodes in $V'_k$ are {\em not} incident to the edges in the left-perfect matching $P^{k-1,k}$. 
We have that $n'_k :=|V'_k|= \sum_{\ell = 0}^{k-1} (-1)^\ell n_{k - \ell} > 0$ and it follows from~\eqref{eq:ninequalities} that $n'_k < n_{k+1}$.   
We then consider the subgraph $G^{k,k+1}_n$ of $G_n$ induced by $V'_k\cup V_{k+1}$, which is a random bipartite graph\footnote{For the case $k = q-1$, the subgraph $G^{q-1,q}_n$ {\em contains} a random bipartite graph, and additional edges are added randomly between nodes of $V_q$ following an Erd\H{o}s-R\'enyi procedure. Note that edges that can appear in the bipartite graph and the ones that can appear between nodes of $V_q$ are distinct and, hence, their appearances are independent.} $B(n'_k,n_{k+1},p^{k,k+1})$ with $p^{k,k+1}$ strictly positive. 
Then, the event  ${\cal E}^{k,k+1}_n$ that  $G^{k,k+1}_n$ admits a left-perfect matching $P^{k,k+1}_n$ is true almost surely. 
Conditioning upon this, we fix a left-perfect matching $P^{k,k+1}_n$ of $G^{k,k+1}_n$ and let  $G'^{k,k+1}_n$ be the subgraph of $G^{k,k+1}_n$ induced by $P^{k,k+1}_n$. It admits $P^{k,k+1}_n$ as a {\em perfect matching}. Then,  $\vec P^{k,k+1}_n$ yields a Hamiltonian decomposition of $\vec G'^{k,k+1}_n$.

Now, let $V'_q$ be the subset of $V_q$ whose nodes are {\em not} incident to the edges in the left-perfect matching $P^{q-1,q}$ and denote by $G^{q}_n$ of the subgraph $G_n$ induced by $V'_q$. 
First, note that $n'_q:=|V'_q| = \sum^{q-1}_{\ell = 0} (-1)^\ell n_{q - \ell}$, which is strictly positive by~\eqref{eq:ninequalities}. In fact, since $n_i/n\to x_i$ as $n\to\infty$,
we have that $n'_q/n \to \sum^{q-1}_{\ell = 0} (-1)^\ell x_{q - \ell} > 0$. 
In particular, it holds that $n'_q \to \infty$ almost surely as $n\to \infty$. 
Next, note that $G^q_n$ is an Erd\H{o}s-R\'enyi random graph $R(n'_q,p^{q,q})$, with $p^{q,q}>0$, where $p^{q,q}$ is the value of $W$ over the square $[\sigma_{q-1},\sigma_q]^2$. 
It then follows from Corollary~\ref{cor:erhamiltoniandec} that $G^q_n$ admits a Hamiltonian decomposition almost surely as $n'_q\to \infty$.

Finally, we conclude this proof by noting that the subgraphs $\vec G'^{1,2}_n,\ldots,\vec G'^{q-1,q}_n$ and $\vec G^q_{n}$ of $\vec G_n$ are disjoint and cover all nodes of $\vec G_n$. Moreover, each subgraph admits a Hamiltonian decomposition. The cycles in these decompositions are thus all disjoint and cover every  node of $\vec G_n$. Together, they form a Hamiltonian decomposition of $\vec G_n$.
\end{proof}

\begin{figure}\label{fig:fish}
\centering
\includegraphics{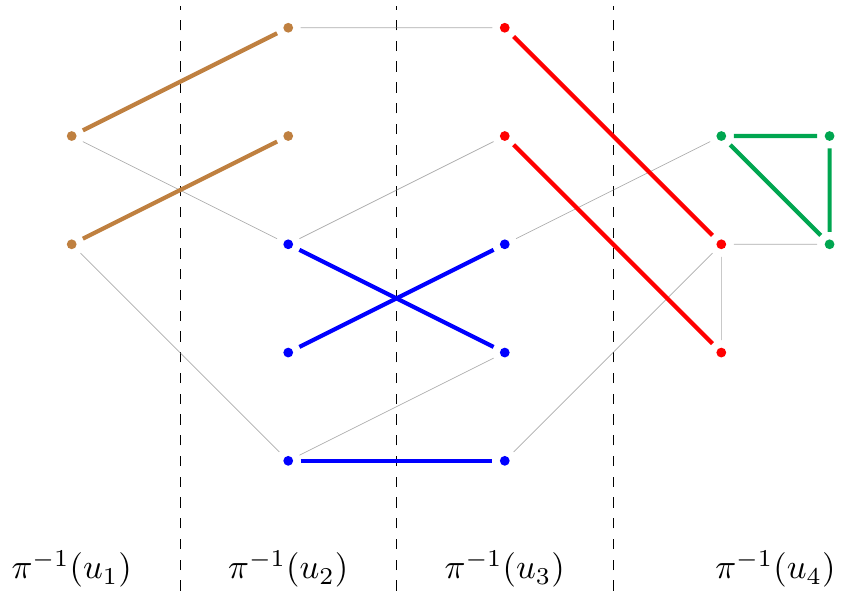}
\caption{Illustration of the proof of Proposition~\ref{prop:hproplineG}: The graph $G_n$ is sampled from the line graphon $W$ illustrated in Fig.~\ref{fig:linegraphon}. The nodes in each $\pi^{-1}(u_i)$, for $1\leq i\leq 4$, are placed in the correspondingly labelled columns. The subgraph in brown corresponds to $G'^{1,2}_n$ in the proof which admits a perfect matching. The subgraphs in blue, red, and green correspond to  $G'^{2,3}_n$,  $G'^{3,4}_n$, and $G^{4}_n$, respectively.}
\end{figure}

\subsection{When is the $H$-property not a zero-one property?}\label{ssec:notonezero}

In this subsection, we study a ``borderline'' case illustrating that the $H$-property is not zero-one for {\em all} step-graphons. To this end, consider the following step-graphon:
\begin{equation}\label{eq:Wborderline}
W(s,t) = 
\begin{cases}
0 & \mbox{if } 0 \leq s, t < 0.5, \\
p & \mbox{otherwise},
\end{cases}
\end{equation}
where $p\in (0,1]$. 
See Fig.~\ref{sfig1:stepgraphonborder} for illustration. 
This graphon satisfies Conditions~1 and 2A, but does {\em not} satisfy Condition 2B. Indeed, the incidence matrix  of its skeleton graph is given by
\begin{equation}\label{eq:zborderline}
Z=\frac{1}{2}\begin{bmatrix}1 & 0 \\ 1 & 2
\end{bmatrix}, 
\end{equation}
so the edge polytope $\cX(S)$ is a line segment in $\R^2$ with the ending points $(0.5.0.5)$ and $(0,1)$. However, the associated concentration vector $x^*$ is given by $(0.5,0.5)$, which is not in the interior of $\cX(S)$. We now have the following result: 

\begin{figure}[t]
    \centering
    \subfloat[\label{sfig1:stepgraphonborder}]{
\includegraphics{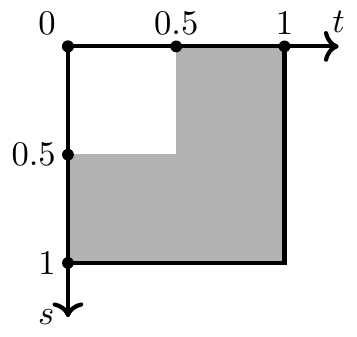}
}
\qquad
\subfloat[\label{sfig1:skeletonborder}]{
  \includegraphics{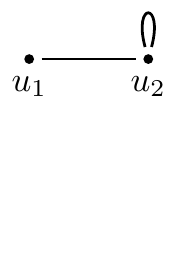}
}
\caption{ {\em Left:} The step-graphon $W$ given in~\eqref{eq:Wborderline},  which has the partition $\sigma=(0,0.5,1)$. The step-graphon takes the values $0$ (in white) and $0 < p  \leq 1$ (in grey). {\em Right:} Its associated skeleton graph.}
    \label{fig:borderlinecase}
\end{figure}

\begin{proposition}\label{prop:borderline}
Let $G_n \sim W$ for the step-graphon $W$ given in~\eqref{eq:Wborderline}. Then,
$$
\lim_{n\to\infty}\mathbb{P}(\vec G_n \mbox{ has a Hamiltonian decomposition}) = 0.5.
$$
\end{proposition}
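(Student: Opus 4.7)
The plan is to obtain matching upper and lower bounds of $0.5$ by combining a purely structural observation with the constructive argument used in the proof of Proposition~\ref{prop:hproplineG}. Set $V_1 := \pi^{-1}(u_1)$ and $V_2 := \pi^{-1}(u_2)$, so that $n_1 := |V_1| \sim \mathrm{Bin}(n, 1/2)$ and $n_2 := n - n_1$.

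\emph{Upper bound.} Since $W \equiv 0$ on $[0,0.5)^2$, the induced subgraph of $G_n$ on $V_1$ is edgeless. Hence in any directed cycle of $\vec G_n$, no two cyclically consecutive nodes can both lie in $V_1$. If a cycle contains $a$ nodes of $V_1$ and $b$ of $V_2$, this forces $a \leq b$, and summing over all cycles of a Hamiltonian decomposition yields the deterministic necessary condition $n_1 \leq n_2$. Therefore
$$
\mathbb{P}\bigl(\vec G_n \text{ has a Hamiltonian decomposition}\bigr) \;\leq\; \mathbb{P}(n_1 \leq n_2),
$$
and the right-hand side tends to $0.5$ by symmetry of the binomial and the Central Limit Theorem.

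\emph{Lower bound.} On the event $\{n_1 < n_2\}$, I would mimic the construction in the proof of Proposition~\ref{prop:hproplineG}: the subgraph of $G_n$ induced by $V_1 \cup V_2$ between the two sides is an \er random bipartite graph $B(n_1,n_2,p)$, so by Lemma~\ref{lem:erdosperfectmatching} it admits a left-perfect matching $P$ almost surely as $n\to\infty$. The $n_1$ edges of $P$ yield $n_1$ disjoint two-cycles in $\vec G_n$ covering $V_1$ together with $n_1$ nodes of $V_2$. The remaining $n'_2 := n_2 - n_1$ nodes of $V_2$ induce an \er random graph $R(n'_2,p)$ on that subset, whose directed version admits a Hamiltonian decomposition almost surely as $n'_2 \to \infty$ by Corollary~\ref{cor:erhamiltoniandec}; concatenating with the two-cycles produces a Hamiltonian decomposition of $\vec G_n$.

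To obtain the exact limit $0.5$, I split the Hamiltonian-decomposition event according to whether $n'_2 > M$ or $n'_2 \leq M$ for a fixed large $M$. The first term contributes $\mathbb{P}(n'_2 > M) - o(1) \to 0.5$ (letting $n\to\infty$ first, using that on $\{n'_2 > M\}$ both the matching step and the residual \er step fail with vanishing probability), while the second is bounded by $\mathbb{P}(0 \leq n'_2 \leq M) = O(M/\sqrt n) \to 0$ via the Stirling estimate $\binom{n}{k}2^{-n} = O(1/\sqrt n)$ uniform in $k$, plus the fact that $\{n'_2 < 0\}$ contributes nothing to the Hamiltonian-decomposition event by the upper bound. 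The main obstacle is precisely this boundary regime $0 < n'_2 \leq M$ with $M$ bounded, where the constructive argument breaks down because Corollary~\ref{cor:erhamiltoniandec} cannot be invoked on a residual \er graph of bounded size; the uniform Stirling density bound above shows that this regime carries vanishing probability and hence does not affect the limit.
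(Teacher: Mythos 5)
Your proposal is correct and follows essentially the same route as the paper: the same structural counting argument (no two consecutive cycle nodes in $V_1$, hence $n_1\leq n_2$ is necessary) for the upper bound, and the same left-perfect-matching-plus-residual-\er construction via Lemma~\ref{lem:erdosperfectmatching} and Corollary~\ref{cor:erhamiltoniandec} for the lower bound. The only (minor) difference is how the boundary regime $0< n_2-n_1=O(1)$ is dismissed: you use a uniform $O(1/\sqrt{n})$ binomial density bound, whereas the paper invokes the central limit theorem to get $|n_1-n_2|>\log n$ with high probability; both serve the same purpose.
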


\begin{proof}
Recall that for $G_n \sim W$,  we have set $n_i=|\pi^{-1}(u_i)|$, for $i = 1,2$. 
Now, consider two cases: 
\vspace{.1cm}

\noindent
{\em Case 1: $n_1 - n_2 > 0$.} First, note that the probability of occurrence of such a case is $\frac{1}{2}$. We next show that in this case, $\vec G_n$ cannot admit a Hamiltonian decomposition. Suppose, to the contrary, that $\vec H$ is a Hamiltonian decomposition in $\vec G_n$. 
Let $v_1$ be an arbitrary node in $\pi^{-1}(u_1)$. Consider the cycle $\vec C \in \vec H$ to which $v_1$ belongs. We express $\vec C$ as a sequence of nodes $\vec C=v_1\cdots v_kv_1$. Because the $n_1$ nodes in $\pi^{-1}(u_1)$ do not have any edge between them, it is clear that no two adjacent nodes in $\vec C$ can belong to $\pi^{-1}(u_1)$. It then follows that the number of nodes of $\vec C$ belonging to $\pi^{-1}(u_1)$ is less than or equal to $|\vec C|/2$. In particular, it implies that $$|\pi^{-1}(u_1) \cap \vec C | \leq |\pi^{-1}(u_2) \cap \vec C |.$$ 
This holds for all cycles in $\vec H$. But since these cycles are disjoint and cover all the nodes of $\vec G_n$, we have to conclude that $n_1\leq n_2$, which is  a contradiction.

\vspace{.1cm}

\noindent
{\em Case 2: $n_1 - n_2 < 0$.} The probability of the occurrence of this case is also $\frac{1}{2}$. 
Consider the subgraph $B$ of $G_n$ obtained by removing the edges between nodes of $\pi^{-1}(u_2)$. By construction, it is a random bipartite graph $B(n_1,n_2,p)$ with $n_1 < n_2$.  Moreover, since $n_1/n \to 1/2$ as $n\to\infty$, it is almost sure that $n_1\to\infty$ as $n\to\infty$. We can then apply Lemma~\ref{lem:erdosperfectmatching} to $B$ and conclude that it contains a left-perfect matching $P$ almost surely. 
Similarly, as done in the proof of Proposition~\ref{prop:hproplineG}, we consider the subgraph $G'_n$ of $G_n$ induced by $P$; the edges in $\vec P$ form a Hamiltonian decomposition of $\vec G'_n$ which is comprised of all two-cycles. 
We next consider the subgraph $G''_n$ of $G_n$ induced by the nodes in $\pi^{-1}(u_2)$ that are {\em not} incident to $P$. 
Then, clearly, $G'_n$ and  $G''_n$ are disjoint and they together cover all the nodes of $G_n$. It thus suffices to show that $\vec G''_n$ admits a Hamiltonian decomposition almost surely to complete the proof of case 2. 
To establish this fact, note that $G''_n$ is an \er random graph on $(n_2-n_1)$ nodes with parameter $p$. We now claim that $(n_2-n_1) \to \infty$ as $n \to \infty$.

To see this, let $X_i$ be the random variable defined as follows: $X_i=1$ if  node $i$ belongs to $\pi^{-1}(u_1)$ and $X_i=-1$ if node $i$ belongs to $\pi^{-1}(u_2)$. Following the sampling procedure given in Section~\ref{sec:Hproperty}, it should be clear that for the $W$ as in~\eqref{eq:Wborderline}, the $X_i$'s are independent, identically distributed and follow a Bernoulli distribution with parameter $\frac{1}{2}$.
We now define their normalized cumulative sum $$\tau_n := \frac{1}{\sqrt{n}}\sum_{i=1}^n X_i =  \frac{n_1 - n_2}{\sqrt{n}};$$ by the central limit theorem~\cite{durrett2019probability}, $\tau_n$ converges in law to a normal random variable $\tau \sim N(0,1)$ as $n\to\infty$. Consequently, it is almost sure that as $n \to \infty$,
\begin{equation*}\label{eq:largediff}
|n_1 - n_2| > \log n,   
\end{equation*}
which proves the claim. 

Finally, by combining the claim with Corollary~\ref{cor:erhamiltoniandec}, we conclude that $\vec G''_n$ admits a Hamiltonian decomposition almost surely as $n\to\infty$.
\end{proof}


\section{Conclusions}\label{sec:conclusions}
We have established in this paper the sufficiency of conditions 1 and 2B given in Subsection~\ref{ssec:condHprop} for line graphons to have the $H$-property. We have also illustrated the importance of the distinction between conditions 2A---which requires the concentration vector to belong to the edge polytope $\cX(S)$ of the skeleton graph $S$--- and condition 2B---which requires the concentration vector to belong to the {\em relative interior} of $\cX(S)$. While condition $2A$ is necessary, it is condition 2B which is sufficient. We have illustrated this fact by exhibiting a graphon which satisfied conditions~1 and~2A, but not~2B, and shown that graphs sampled from these graphons admitted Hamiltonian decompositions with probability $1/2$ asymptotically for $n \to \infty$.

\bibliographystyle{amsplain}      
\bibliography{refs.bib}       
\end{document}